\theoremstyle{plain} 
\newtheorem{thm}{Theorem}[section]
\newtheorem{cor}[thm]{Corollary}
\newtheorem{lem}[thm]{Lemma}
\theoremstyle{definition} 
\newtheorem{definition}[thm]{Definition}
\newtheorem{remarque}[thm]{Remark}
\newcommand{\N}{\mathbb{N}}
\newcommand{\Z}{\mathbb{Z}}
\def\eps{\varepsilon }
\def\bs{\backslash}
\def\l{\left}
\def\r{\right}
\def\X{\mathcal X}
\def\H{\mathcal H}
\def\K{\mathcal K}
\def\arr{\longrightarrow}
\providecommand{\abs}[1]{\left| #1 \right|}
\providecommand{\norm}[1]{\left\|#1\right\|}
\providecommand{\tend}[1]{\underset{ #1 }{\longrightarrow}}
\newcommand{\remove}[1]{ }
\newcommand{\scal}[2]{\ensuremath{{\langle #1,#2\rangle}}}
\numberwithin{equation}{section}
\DeclareMathOperator{\val}{val}
\author[C. Badea]{Catalin Badea}
\address[C. Badea and O. Devys]{Univ. Lille, CNRS UMR 8524 - Laboratoire Paul
Painlev\'e, F-59000 Lille, France}
\email{cbadea@univ-lille.fr}
\author[O. Devys]{Oscar Devys}
\begin{document}

\title[Rochberg's abstract coboundary theorem]{Rochberg's abstract coboundary theorem revisited}

\keywords{coboundary theorems, unilateral shifts, Wold decomposition, functional equations}
\subjclass[2020]{47A05, 47A35, 39B05}
 \thanks{This work was supported in part by
the project FRONT of the French
National Research Agency (grant ANR-17-CE40-0021), by the Labex CEMPI (ANR-11-LABX-0007-01) and by the Max Planck Institute of Mathematics (Bonn).}

\begin{abstract}  
Rochberg's coboundary theorem provides conditions under which the equation $(I-T)y = x$ is solvable in $y$. Here $T$ is a unilateral shift on Hilbert space, $I$ is the identity operator and $x$ is a given vector. The conditions are expressed in terms of Wold-type decomposition determined by $T$ and growth of iterates of $T$ at $x$. We revisit Rochberg's theorem and prove the following result. Let $T$ be an isometry acting on a Hilbert space $\H$ and let $x \in \H$. Suppose that
$
\sum_{k=0}^\infty k \| T^{*k} x \| < \infty.
$
Then $x$ is in the range of $(I-T)$ if (and only if)
$\l\|\sum_{k= 0}^n T^k x \r\| = o(\sqrt{n}).$  When $T$ is merely a contraction, $x$ is a coboundary under an additional assumption. Some applications to $L^2$-solutions of the functional equation $f(x)-f(2x) = F(x)$, considered by Fortet and Kac, are given.
\end{abstract}

\maketitle

\hfill\emph{To the memory of J\"org Eschmeier}

\par\bigskip
\section{Introduction}\label{Introduction}
\subsection{Coboundaries} Let $T$ be a bounded linear operator acting on a complex Banach space $\X$. An element $x$ of $\X$ is called a \emph{coboundary for} $T$ if there is $y\in \X$ such that $x = y - Ty$. Coboundaries are related to the behavior of the \emph{ergodic sums}
$$ S_n(T) x := x + Tx + \dots + T^{n-1}x, \quad n \ge 1.$$
A variant of the mean ergodic theorem for power bounded operators on reflexive Banach spaces has been proved by von Neumann for Hilbert spaces and by Lorch in the general case~; see for instance \cite{Krengel}. Recall that $T$ is said to be \emph{power bounded} if 
$\sup_{n\ge 1} \|T^n\| < \infty$. We have 
$$ \X = \l\{ x \in\X : \lim_{n\to\infty} \frac{1}{n} S_n(T) x \text{ exists} \r\} 
= \{ y \in \X : Ty = y \} \oplus \overline{(I-T)\X}.$$
In particular, as a consequence of this ergodic decomposition, we have 
$$ x\in \overline{(I-T)\X} \quad \Leftrightarrow \quad \lim_{n\to\infty} \frac{1}{n} S_n(T) x = 0.$$
One can say more about the rate of convergence of $(1/n)S_n(T) x$ to zero when $x$ is a coboundary. Indeed, when there exists a solution $y$ of the equation $y - Ty = x$, the ergodic sums satisfy $S_n(T) x = y - T^ny$. It follows that $(S_n(T)x)_{n\in\N}$ is bounded. Therefore 
\begin{equation} \label{eq:11}
\l\|\frac{1}{n} S_n(T) x\r\| = O \l(\frac{1}{n}\r).
\end{equation}
This rate of convergence to zero, namely $O(1/n)$, characterizes coboundaries of power bounded operators on reflexive spaces. Indeed, the converse result (whenever $T$ is power bounded and $\X$ is reflexive, an element $x$ satisfying \eqref{eq:11} is a coboundary for $T$) has been proved by Browder \cite{Browder} and rediscovered by Butzer and Westphal \cite{BW}.

We also note (see for instance \cite{Gomilko}, \cite{BadeaMuller}, \cite{BadeaGrivauxMuller} and the references therein) that if $(I-T) \X$ is not closed, then for every sequence $(a_n)_{n\ge 1}$ of positive real numbers converging to zero, there exists $x \in \overline{(I-T)\X} \bs (I-T)\X$ such that 
$$  \l\|\frac{1}{n} S_n(T) x\r\| \ge a_n, \quad \forall n \ge 1.$$
In particular, there is no general rate of convergence in the mean ergodic theorem outside coboundaries. 

\subsection{Rochberg's theorem.} Browder's theorem has been extended to the case that $T$ is a dual operator on a dual Banach space by Lin \cite{Lin} ; see also Lin and Sine \cite{LinSine}. We refer the reader to the introduction of \cite{CohenLin}, and the references cited therein, for the history of Browder's theorem and for other extensions and generalizations. We mention here only two references, namely \cite{Robinson} and \cite{Kozma}, dealing with the Hilbert space situation. Any of these Hilbert or Banach space abstract characterizations is not strong enough to obtain as consequences classical results of Fortet and Kac \cite{Fortet, Kac} who dealt with the case $\X = L^2(0,1)$ and $Sf(x) = f(2x)$. This operator $S$ is the Koopman operator associated with the doubling map on the torus~;  see the last section of this manuscript for more information about coboundaries of $S$. This situation has been remedied by Rochberg \cite{Rochberg}, who showed that a condition of $o(\sqrt{n})$ growth of ergodic sums at $x$ is sufficient to ensure that $x$ is a coboundary for a unilateral shift on Hilbert space. Notice that the Koopman operator $S$ acts as a unilateral shift on the subspace of 
$L^2 (0,1)$ of functions whose zeroth Fourier coefficient vanishes.

\smallskip

We need the following classical definition in order to state Rochberg's abstract coboundary theorem. 

\begin{definition}
Let $T$ be an isometry acting on Hilbert space $\H$. A closed subspace $\K$ of $\H$ is called \emph{wandering} for $T$ whenever 
$$ T^p\K \perp T^q \K \quad \text{for} \quad p,q \in \N, p\ne q.$$

The isometry $T$ is called a \emph{(unilateral) shift} if $\H$ possess a closed subspace $\K$, wandering for $T$ and such that
$$ \bigoplus_{n=0}^\infty T^n \K = \H.$$
\end{definition}

\begin{thm}[\cite{Rochberg}]
Let $S$ be a shift and let $f$ be an element of $\H$. Using the notation of the preceding definition, we denote by $f_j$ the projection of $f$ onto the closed subspace $S^j \K$. Suppose that there exists $\beta > 0$ such that
$$ \| f_j\| = O (2^{-\beta j}).$$
Then there exists $g$ in $\H$ such that $(I-S)g = f$ if and only if  
$$ \lim_{n\to \infty} \frac{1}{n} \l\|\sum_{k= 0}^n S^k f \r\|^2 = 0.$$
\end{thm}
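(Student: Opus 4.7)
The plan is to decompose $f$ using the Wold-type orthogonal decomposition induced by $S$, rewrite both the ergodic sums and the candidate primitive in terms of the components, and identify the correct coboundary criterion as the vanishing of the full series of components in $\K$.

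\textbf{Step 1 (Setup).} Since $\H = \bigoplus_{j\ge 0} S^j\K$, write $f_j = S^j e_j$ with $e_j\in\K$ and $\|e_j\|=\|f_j\| = O(2^{-\beta j})$. The candidate for $g$ such that $(I-S)g = f$ is obtained formally: expanding $g = \sum_{j\ge 0} S^j h_j$ with $h_j\in\K$, the equation $(I-S)g=f$ forces $h_0 = e_0$ and $h_j - h_{j-1} = e_j$, i.e., $h_j = \sigma_j := \sum_{k=0}^j e_k$. Then $g\in\H$ iff $\sum_{j\ge 0}\|\sigma_j\|^2<\infty$, and since $\|e_j\|$ decays geometrically, $\sigma_j$ converges to $\sigma_\infty := \sum_{k\ge 0} e_k \in \K$, so summability forces $\sigma_\infty = 0$, and conversely $\sigma_\infty = 0$ gives $\|\sigma_j\| = O(2^{-\beta j})$ which is square-summable. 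So the goal reduces to showing
\[
\lim_{n\to\infty} \frac{1}{n}\l\|\sum_{k=0}^n S^k f\r\|^2 = 0 \iff \sigma_\infty = 0.
\]

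\textbf{Step 2 (Computing the ergodic sum).} Using $S^k f = \sum_{j\ge 0} S^{k+j} e_j$ and grouping terms of matching weight $m = k+j$, one gets
\[
\sum_{k=0}^n S^k f \;=\; \sum_{m=0}^n S^m \sigma_m \;+\; \sum_{m=n+1}^{\infty} S^m (\sigma_m - \sigma_{m-n-1}),
\]
and by the pairwise orthogonality of the $S^m\K$,
\[
\l\|\sum_{k=0}^n S^k f\r\|^2 = \sum_{m=0}^n \|\sigma_m\|^2 + \sum_{m=n+1}^\infty \|\sigma_m - \sigma_{m-n-1}\|^2.
\]

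\textbf{Step 3 (The Cesàro limit forces $\sigma_\infty = 0$).} Since $\sigma_m \to \sigma_\infty$ (the $\|e_j\|$ are geometrically small, hence summable), Cesàro's theorem yields $\frac{1}{n}\sum_{m=0}^n \|\sigma_m\|^2 \to \|\sigma_\infty\|^2$. The second term is nonnegative, so the hypothesis $\frac{1}{n}\|\sum_{k=0}^n S^k f\|^2 \to 0$ forces $\sigma_\infty = 0$.

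\textbf{Step 4 (Sufficiency and necessity).} When $\sigma_\infty = 0$, the element $g := \sum_{j\ge 0} S^j \sigma_j$ belongs to $\H$ by Step 1, and a direct check (or telescoping) gives $(I-S)g = \sum_{j\ge 0} S^j e_j = f$, establishing the ``if'' direction. For necessity, if $f = (I-S)g$, telescoping gives $\sum_{k=0}^n S^k f = g - S^{n+1}g$, so $\|\sum_{k=0}^n S^k f\|\le 2\|g\|$ and $\frac{1}{n}\|\sum_{k=0}^n S^k f\|^2 = O(1/n)$, which is even stronger than $o(1)$.

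The main technical point is Step 2, the reorganization of the double sum to expose the partial sums $\sigma_m$; once this identity and the orthogonality of the $S^m\K$ are in place, everything else is essentially Cesàro convergence together with the geometric decay of the tail $\sigma_m - \sigma_\infty$ that makes the ``$m\ge n+1$'' block negligible when $\sigma_\infty = 0$.
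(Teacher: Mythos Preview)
Your proof is correct and considerably more direct than the route the paper takes. The paper does not prove Rochberg's theorem on its own; instead it establishes the more general Theorem~1.4 (for an arbitrary isometry $T$ under the summability condition $\sum_{k\ge 0} k\|T^{*k}x\|<\infty$) via two auxiliary lemmas, a parallelogram-identity computation, and a somewhat intricate estimate of the tails $R_r = T^{r+1}T^{*(r+1)}x$, and then observes that Rochberg's exponential-decay hypothesis $\|f_j\|=O(2^{-\beta j})$ implies $\|S^{*j}f\|=O(2^{-\beta j})$ and hence the summability condition of Theorem~1.4.

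Your argument instead exploits the explicit shift structure head-on: writing $f_j=S^j e_j$, you identify the unique formal solution as $g=\sum_j S^j\sigma_j$ with $\sigma_j=\sum_{k\le j}e_k$, reduce solvability to $\sigma_\infty=0$, and then read off $\sigma_\infty$ from the Ces\`aro limit of the first block in your orthogonal expansion of $\|\sum_{k=0}^n S^k f\|^2$. This is shorter and more transparent, but it leans on the geometric decay of $\|e_j\|$ in two places (existence of $\sigma_\infty$ and the tail bound $\|\sigma_j\|=O(2^{-\beta j})$ once $\sigma_\infty=0$), so it does not immediately extend to the weaker hypothesis of Theorem~1.4. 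One minor expository point: your final paragraph suggests you also control the ``$m\ge n+1$'' block when $\sigma_\infty=0$, but your actual argument never needs this---you go directly from $\sigma_\infty=0$ to the existence of $g$, and handle the ``only if'' direction by the telescoping bound $\|g-S^{n+1}g\|\le 2\|g\|$.
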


\begin{remarque}
The condition 
$$ \| f_j\| = O (2^{-\beta j})$$
 is of course dependent of the decomposition of $\H$ associated with the unilateral shift $S$. It implies $\| S^{* j} f \| = O (2^{-\beta j})$.
\end{remarque}

\subsection{Statement of the main results.} In the next theorem the unilateral shift $S$ is replaced by an arbitrary isometry $T$ and the growth of the norm of the projection $f_j$ by the convergence of the series $\sum_{j=0}^\infty j\| T^{* j} f \|$. The statement of the result does not depend on the Wold decomposition, at least not in an explicit way. For the convenience of the reader, the Wold decomposition theorem is recalled below. Theorem~\ref{t4.6} implies Rochberg's theorem and it allows to recover Kac's results about the coboundaries of the Koopman operator of the doubling map. 

\begin{thm}\label{t4.6}
Let $T$ be an isometry acting on a Hilbert space $\H$ and let $x \in \H$. Suppose that
\begin{equation} \label{eq:summab}
\sum_{k=0}^\infty k \| T^{*k} x \| < \infty.
\end{equation}
Then there exists $y \in \H$ such that $x = (I-T)y$ if and only if
$$ \lim_{n\to \infty} \frac{1}{n} \l\|\sum_{k= 0}^n T^k x \r\|^2 = 0.$$
\end{thm}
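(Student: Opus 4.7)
The ``only if'' direction is immediate: if $x = (I-T)y$, a telescoping argument gives $S_n(T)x = y - T^n y$, and since $T$ is isometric we have $\|S_n(T)x\| \le 2\|y\|$, so trivially $(1/n)\|S_n(T)x\|^2 \to 0$. All the work is on the converse.

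For the converse, I would first reduce to the shift case via the Wold decomposition $\H = \H_u \oplus \H_s$. Both summands are $T^*$-invariant, so for $x = x_u + x_s$ one has $\|T^{*k}x\|^2 = \|T^{*k}x_u\|^2 + \|T^{*k}x_s\|^2$. Since $T^*$ is unitary on $\H_u$, $\|T^{*k}x_u\| = \|x_u\|$, and \eqref{eq:summab} forces $\|T^{*k}x\| \to 0$, whence $x_u = 0$. Thus one may assume $T$ is a unilateral shift with wandering subspace $\K$ and expand $x = \sum_{j\ge 0} T^j e_j$ with $e_j \in \K$. Using $T^*(T^j e_j) = T^{j-1} e_j$ for $j\ge 1$, one finds $\|T^{*k}x\|^2 = \sum_{j\ge k}\|e_j\|^2$, so $\|e_k\| \le \|T^{*k}x\|$; hence $\sum_k k\|e_k\| < \infty$, and in particular $\sum_k \|e_k\| < \infty$, so the partial sums $a_m := e_0 + \cdots + e_m$ converge in norm to some $a \in \K$.

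The heart of the argument is the identity
\[
S_n(T)x = \sum_{m=0}^{n-1} T^m a_m + \sum_{m=n}^\infty T^m(a_m - a_{m-n}),
\]
obtained by expanding $\sum_{k=0}^{n-1}T^k\sum_{j\ge 0}T^j e_j$ and collecting powers of $T$. Pairwise orthogonality of the $T^m \K$ then gives
\[
\|S_n(T)x\|^2 \ge \sum_{m=0}^{n-1}\|a_m\|^2,
\]
and since $\|a_m\|^2 \to \|a\|^2$, Ces\`aro averaging yields $\liminf_n (1/n)\|S_n(T)x\|^2 \ge \|a\|^2$. The ergodic-sum hypothesis therefore forces $a = 0$.

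Knowing $a = 0$, one has $\|a_m\| \le R_m := \sum_{j>m}\|e_j\|$, and swapping summations gives $\sum_m R_m = \sum_j j\|e_j\| < \infty$; since $(R_m)$ is bounded and in $\ell^1$ it is also in $\ell^2$, hence $\sum_m \|a_m\|^2 < \infty$. Therefore $y := \sum_{m\ge 0} T^m a_m$ defines an element of $\H$ (again by orthogonality), and a telescoping check gives $(I-T)y = \sum_m T^m(a_m - a_{m-1}) = \sum_m T^m e_m = x$. The main obstacle is spotting the decomposition of $S_n(T)x$ displayed above; once this is in hand, the lower bound combined with Ces\`aro convergence of $\|a_m\|^2$ is what forces $a = 0$, and the summability hypothesis \eqref{eq:summab} plays a double role: it produces the limit $a$ in the first place and then, once $a$ is known to vanish, guarantees that $\sum\|a_m\|^2 < \infty$ so that $y$ actually lies in $\H$.
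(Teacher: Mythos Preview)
Your argument is correct and in fact reaches the same candidate solution as the paper: writing $x_j = T^j e_j$, the paper sets $y_r = \sum_{k=0}^r T^k x_{r-k} = T^r(e_0+\cdots+e_r) = T^r a_r$, so its $y=\sum_r y_r$ is exactly your $\sum_m T^m a_m$. The genuine difference is in proving $\sum_r\|a_r\|^2<\infty$. The paper establishes two lemmas --- a Ces\`aro limit formula $\lim_n n^{-1}\|S_n u\|^2 = \|u\|^2 + 2\operatorname{Re}\sum_{k\ge1}\langle u,T^k u\rangle$ and an identity relating $\|y_r\|^2$ to such a limit --- and then, via the parallelogram law, derives an exact expression $\|y_r\|^2 = -\|R_r\|^2 - 2\operatorname{Re}\sum_{k\ge1}\langle R_r,T^k x\rangle$ which it bounds term by term. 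You instead read off the orthogonal expansion of $S_n(T)x$ directly, use only the lower bound $\|S_n(T)x\|^2\ge\sum_{m<n}\|a_m\|^2$ together with $\|a_m\|^2\to\|a\|^2$ to force $a=\sum_j e_j=0$, and then bound $\|a_m\|$ by the $\ell^1$-tail $R_m$. Your route is shorter and more transparent, and it isolates the single scalar obstruction $a\in\K$ whose vanishing encodes the ergodic-sum hypothesis; the paper's route, on the other hand, produces the limit formula of Lemma~\ref{l4.4} as a by-product, which has independent interest.
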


Note however that the condition \eqref{eq:summab} implies that $x$ is necessarily an element of the shift part of the isometry $T$. 

Considering coboundaries of adjoints of isometries, we notice that the identity $I-T = (T^*-I)T$ shows that every coboundary of the isometry $T$ is also a coboundary for its adjoint $T^*$. It follows from \cite[Proposition 4.3]{DerLin} that when the isometry $T$ is not invertible (\emph{i.e.}, not a unitary operator), there are coboundaries for $T^*$ which are not coboundaries for $T$.

The following result, more general than Theorem~\ref{t4.6}, is about coboundaries of contractions (operators of norm no greater than one). 

\begin{thm}\label{t4.7}
Let $T$ be a linear operator acting on a Hilbert space $\H$ with $\|T\| \le 1$. Let $x \in \H$ and denote $S_n(T) x := x + Tx + \dots + T^{n-1}x$. Suppose that \eqref{eq:summab} holds, as well as 
\begin{equation} \label{eq:osqrt}
\|S_n(T)x\| = o(\sqrt{n}), \quad n\to \infty 
\end{equation}
and 
\begin{equation} \label{eq:withD}
\sum_{k=1}^{n} \left(\|S_k(T)x\|^2 - \|TS_k(T)x\|^2 \right) = o(n), \quad n\to \infty .
\end{equation}
Then there exists $y \in \H$ such that $x = (I-T)y$. In addition, $y$ can be chosen such that $\|Ty\| = \|y\|$.
\end{thm}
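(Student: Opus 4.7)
The plan is to reduce Theorem~\ref{t4.7} to the isometric case of Theorem~\ref{t4.6} by passing to the minimal isometric dilation of $T$. Set $D_T = (I-T^*T)^{1/2}$ and $\mathcal{D}_T = \overline{D_T \H}$, and let $V$ be the Sch\"affer isometric dilation acting on
$$\widetilde{\H} := \H \oplus \bigoplus_{j\ge 1}\mathcal{D}_T$$
by $V(\xi_0,\xi_1,\xi_2,\ldots) = (T\xi_0,\,D_T\xi_0,\,\xi_1,\,\xi_2,\ldots)$, with $\H$ identified as the first summand.

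The first step is to verify the hypotheses of Theorem~\ref{t4.6} for the pair $(V, x)$. A direct computation gives $V^*(\eta_0,0,0,\ldots) = (T^*\eta_0,0,0,\ldots)$, so $V^{*k}x = T^{*k}x$ for every $k\ge 0$ and \eqref{eq:summab} transfers verbatim to $V$. A short induction shows that
$$V^k x = \bigl(T^k x,\ D_T T^{k-1}x,\ D_T T^{k-2}x,\ \ldots,\ D_T x,\ 0,\ldots\bigr),$$
and summing in $k$ yields
$$\|S_n(V)x\|^2 = \|S_n(T)x\|^2 + \sum_{\ell=1}^{n-1}\|D_T S_\ell(T)x\|^2.$$
Using the identity $\|D_T u\|^2 = \|u\|^2 - \|Tu\|^2$, hypotheses \eqref{eq:osqrt} and \eqref{eq:withD} combine to give $\|S_n(V)x\|^2 = o(n)$, which is exactly the growth condition required by Theorem~\ref{t4.6}. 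This is the main delicate point: hypothesis~\eqref{eq:withD} is designed precisely to absorb the defect contribution that the dilation introduces.

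Applying Theorem~\ref{t4.6} to $(V,x)$ produces $y = (y_0, y_1, y_2, \ldots) \in \widetilde{\H}$ with $(I-V)y = x$. Componentwise, this reads
$$y_0 - T y_0 = x,\qquad y_1 = D_T y_0,\qquad y_j = y_{j-1}\ \text{for}\ j\ge 2.$$
The last recursion forces $y_j = D_T y_0$ for every $j \ge 1$, and the summability $\sum_{j\ge 1}\|y_j\|^2 < \infty$ required for $y \in \widetilde{\H}$ then forces $D_T y_0 = 0$. Consequently $y = (y_0, 0, 0, \ldots) \in \H$ satisfies $(I-T)y_0 = x$, and $\|Ty_0\|^2 = \|y_0\|^2 - \|D_T y_0\|^2 = \|y_0\|^2$, giving the desired $y$ with $\|Ty\| = \|y\|$. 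Once the growth estimate for $\|S_n(V)x\|$ is established, the rigid shift structure of the Sch\"affer construction automatically forces the solution to live in $\H$ and to satisfy the norm equality.
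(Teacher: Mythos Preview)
Your proof is correct and follows essentially the same route as the paper: the paper constructs the isometric lift $R$ on $\ell^2(\H)$ by $R(x_0,x_1,\ldots)=(Tx_0,Dx_0,x_1,\ldots)$, verifies that \eqref{eq:summab} transfers and that the $o(\sqrt{n})$ condition for $R$ at $\tilde x=(x,0,0,\ldots)$ amounts exactly to \eqref{eq:osqrt} together with \eqref{eq:withD}, and then reads off $x=(I-T)y$ and $Dy=0$ from $(I-R)\tilde y=\tilde x$. The only cosmetic difference is that you work on the minimal Sch\"affer space $\H\oplus\bigoplus_{j\ge1}\mathcal D_T$ rather than on $\ell^2(\H)$.
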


We obtain the following consequence. 

\begin{cor}\label{t4.8}
Let $T$ be a linear operator acting on a Hilbert space $\H$ with $\|T\| \le 1$. Let $x \in \H$ and denote $S_n(T) x := x + Tx + \dots + T^{n-1}x$. Suppose that \eqref{eq:summab} and \eqref{eq:osqrt} hold, as well as 
\begin{equation} \label{eq:kron}
\sum_{k=1}^{\infty} \frac{\left(\|S_k(T)x\|^2 - \|TS_k(T)x\|^2 \right)}{k} < \infty.
\end{equation}
Then there exists $y \in \H$ such that $x = (I-T)y$ and $\|Ty\| = \|y\|$.
\end{cor}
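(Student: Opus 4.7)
The plan is to deduce Corollary~\ref{t4.8} from Theorem~\ref{t4.7} by verifying that the extra hypothesis \eqref{eq:kron} implies condition \eqref{eq:withD}; the hypotheses \eqref{eq:summab} and \eqref{eq:osqrt} carry over unchanged, so nothing else needs to be checked.

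First, I would observe that because $\|T\|\le 1$, the scalar sequence
\[
a_k := \|S_k(T)x\|^2 - \|TS_k(T)x\|^2
\]
is nonnegative for every $k\ge 1$. Thus \eqref{eq:kron} says that the nonnegative series $\sum_{k\ge 1} a_k/k$ converges.

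Next I would invoke Kronecker's lemma in the classical form: if $(a_k)$ is a sequence of nonnegative reals and $\sum_{k\ge 1} a_k/k$ converges, then $\frac{1}{n}\sum_{k=1}^n a_k \to 0$ as $n\to\infty$. Applied to our sequence, this yields exactly
\[
\sum_{k=1}^{n}\bigl(\|S_k(T)x\|^2 - \|TS_k(T)x\|^2\bigr) = o(n),
\]
which is hypothesis \eqref{eq:withD}. Combined with the already-assumed \eqref{eq:summab} and \eqref{eq:osqrt}, Theorem~\ref{t4.7} then produces $y\in\H$ with $x=(I-T)y$ and $\|Ty\|=\|y\|$.

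There is essentially no serious obstacle: the whole content is the Kronecker-lemma step, whose hypothesis (nonnegativity of the summands) is free from $\|T\|\le 1$. The only point worth flagging in the write-up is to make this nonnegativity explicit, since Kronecker's lemma fails in general for signed series; once $a_k\ge 0$ is noted, the implication \eqref{eq:kron}$\Rightarrow$\eqref{eq:withD} is immediate, and Corollary~\ref{t4.8} follows at once from Theorem~\ref{t4.7}.
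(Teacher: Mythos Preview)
Your proposal is correct and matches the paper's own proof, which simply states that Corollary~\ref{t4.8} follows from Theorem~\ref{t4.7} and Kronecker's lemma. One small remark: Kronecker's lemma in fact holds without the nonnegativity assumption on the $a_k$, so your caveat about signed series is unnecessary, though harmless.
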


Some remarks are in order. Theorem~\ref{t4.7} and its consequence Corollary~\ref{t4.8} shows that the coboundary equation can be solved within the \emph{maximal isometric subspace} 
$$M = \{x\in \H : \|T^nx\| = \|x\| \text{ for every }n\ge0\}.$$ 
We refer to \cite{Nagy} and \cite{levan} for the canonical decomposition of a contraction into the maximal isometric subspace and its orthogonal. 

Conditions \eqref{eq:withD} and \eqref{eq:kron} are easily verified when $T$ is an isometry. The conditions \eqref{eq:summab} and \eqref{eq:osqrt} are always satisfied when $\|T\| < 1$ ;  however \eqref{eq:kron} is not, unless $x=0$. In fact, $\|Ty\| = \|y\|$ and $\|T\| < 1$ imply that $y=0$ and thus $x=0$. Of course, as $(I-T)$ is invertible when $\|T\| < 1$ by Carl Neumann's lemma, the coboundary equation $x = (I-T)y$ is always solvable in this case.

\subsection{Outline of the paper.} A proof of Theorem~\ref{t4.6} is given in the next section. The more general Theorem~\ref{t4.7} and its consequence Corollary~\ref{t4.8} are proved in Section 3. Some applications to the functional equation $g(x) - g(2x) = f(x)$ are presented in the next section. The last section collects the acknowledgments, a dedication statement and (imposed) conflict of interest and data availability statements.

\section{Proof of Theorem \ref{t4.6}}
We first recall Wold's decomposition Theorem (see \cite[Chapter 1]{Nagy}).

\begin{thm}[Wold decomposition]
Let $T$ be an isometry on a Hilbert $\H$. Then $\H$ decomposes as an orthogonal sum $\H = \H_0 \oplus \H_1$ such that $\H_0$ and $\H_1$ are reducing for $T$, the restriction of $T$ to $\H_0$ is a unitary operator and the restriction of $T$ to $\H_1$ is a unilateral shift (one of the subspaces can eventually reduce to $\{0\}$). This decomposition is unique~; in particular, we have
$$ \H_0 = \bigcap_{n=0}^\infty T^n \H \quad \text{ and }\quad \H_1 = \bigoplus_{n=0}^\infty T^n \K \quad \text{, where } \quad \K = \H \ominus T\H.$$
\end{thm}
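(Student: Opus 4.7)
The plan is to follow the classical construction that starts from the co-range of $T$. First I set $\K := \H \ominus T\H$, using that $T\H$ is closed (any isometry is a topological embedding). I would check that $\K$ is wandering: since $T^n\K \subseteq T\H$ for $n\ge 1$, $\K \perp T^n\K$, and applying the isometry $T^p$ to both sides yields $T^p\K \perp T^q\K$ whenever $p\ne q$. Setting $\H_1 := \bigoplus_{n=0}^\infty T^n\K$ and $\H_0 := \bigcap_{n=0}^\infty T^n\H$, the restriction of $T$ to $\H_1$ is already a unilateral shift by construction.

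The heart of the proof is to verify the orthogonal splitting $\H = \H_0 \oplus \H_1$. I would iterate the trivial decomposition $\H = \K \oplus T\H$ by applying $T^j$ (which preserves orthogonality) to obtain $T^j\H = T^j\K \oplus T^{j+1}\H$; telescoping then gives
$$ \H = \bigoplus_{j=0}^{n-1} T^j\K \;\oplus\; T^n\H, \quad n\ge 1.$$
Taking orthogonal complements, $T^n\H = \bigl(\bigoplus_{j=0}^{n-1} T^j\K\bigr)^\perp$, and letting $n\to\infty$ identifies $\bigcap_n T^n\H$ with $\H_1^\perp$, yielding the desired decomposition.

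It then remains to check that both summands reduce $T$ and that $T|_{\H_0}$ is unitary. Invariance of $\H_1$ is clear, and invariance of $\H_0$ follows from $T\bigl(\bigcap_n T^n\H\bigr) \subseteq \bigcap_n T^{n+1}\H = \H_0$. Since $\H_1 = \H_0^\perp$, this gives the reducing property for both. For surjectivity of $T|_{\H_0}$: any $x\in \H_0 \subseteq T\H$ has a unique preimage $y = T^*x$, and $x \in T^{n+1}\H$ forces $y \in T^n\H$ for every $n\ge 0$, so $y \in \H_0$. Combined with the isometry property, this makes $T|_{\H_0}$ unitary. For uniqueness, given any other such decomposition $\H = \H_0' \oplus \H_1'$, the unitarity of $T|_{\H_0'}$ gives $T\H = \H_0' \oplus T\H_1'$, whence the wandering subspace of $T|_{\H_1'}$ coincides with $\H \ominus T\H = \K$, forcing $\H_1' = \H_1$ and $\H_0' = \H_0$.

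I expect the only mildly delicate point to be the closure/limit step that identifies $\H_1^\perp$ with $\bigcap_n T^n\H$; everything else is essentially bookkeeping using that $T$ is an isometry.
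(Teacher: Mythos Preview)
Your argument is correct and is the standard proof of the Wold decomposition. However, the paper does not actually prove this theorem: it is merely \emph{recalled} with a reference to \cite[Chapter 1]{Nagy}, and the text moves directly from the statement to the proof of Theorem~\ref{t4.6}. So there is no ``paper's own proof'' to compare against; your write-up simply supplies what the paper chose to cite.
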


\begin{proof}[Proof of Theorem \ref{t4.6}]
If $x = (I-T) y$, then $\sum_{k= 0}^n T^k x = x - T^{n+1}x$. Therefore $\sum_{k= 0}^n T^k x$ is bounded since the isometry $T$ is clearly power-bounded. In particular, 
$$ \lim_{n\to \infty} \frac{1}{n} \l\|\sum_{k= 0}^n T^k x \r\|^2 = 0.$$

Suppose now that
$$ \lim_{n\to \infty} \frac{1}{n} \l\|\sum_{k= 0}^n T^k x \r\|^2 = 0.$$
We want to show the existence of a solution $y$ of the equation $(I-T)y = x$.

Let $\H = \H_0 \oplus \H_1$ be the Wold's decomposition associated with $T$. We notice that $x \in \H_1$. Indeed, if $x = x_0 + x_1$ according to Wold's decomposition of $\H$, then
$$ \lim_{k\to \infty} \| T^{*k} x_1 \| = 0 \quad 
\text{and}\quad \| T^{*n} x_0 \| = \|x_0\|, \quad \forall n \in \N.$$
Therefore
$$\lim_{n\to \infty} \| T^{*n} x \| = \|x_0\|.$$
On the other hand, it follows from \eqref{eq:summab} that
$$\lim_{n\to \infty} \| T^{*n} x \| = 0.$$
We obtain that $x \in \H_1$. In particular, if $\H_1$ is reduced to $\{0\}$, then $x = 0 = (I-T) 0$. Therefore, without loss of any generality, we can assume that $T$ is a shift. 

For each $n \in \N$, we denote by $P_n$ the projection onto the subspace $T^n \K$. For $u \in \H$, we set $u_n:=P_n(u)$, $u^n: = \sum_{j=0}^n u_j$ and $R_n := u-u^n$.

Suppose that $y$ is solution of the equation $(I-T)y = x$. We first obtain, by projecting to $T^k \K$ for each $k \in \N$, the following system of equations :
$$\begin{cases}
   x_0 = y_0 \\
x_1 = y_1- T y_0 \\
\vdots \\
x_k = y_k - T y_{k-1} \\
\vdots
  \end{cases}$$
We then obtain
$$\begin{cases}
   y_0 = x_0 \\
y_1 = x_1 + T y_0 = x_1 + T x_0\\
\vdots \\
y_k = x_k + T y_{k-1} = x_k + T x_{k-1} + \dots + T^{k-1}x_1 + T^k x_0 \\
\vdots
  \end{cases}$$
Consider now, for each $r \in \N$, the element
$$ y_r = \sum_{k=0}^r T^k x_{r-k} \in T^r \K.$$
We will prove that $\sum_{r=0}^\infty \|y_r\|^2$ is convergent, thus showing that $y = \sum_{r=0}^\infty y_r$ is well defined in $\H$. In that case, for every $r\in \N$, we have
\begin{align*}
P_r \big( (I-T) y \big) & = y_r - T y_{r-1} \\
& = \sum_{j=0}^r T^j x_{r-j} - \sum_{j=0}^{r-1} T^{j+1} x_{r-1-j} \\
& = x_r.
\end{align*}
This shows that $(I-T)y = x$.

To prove that $\sum_{r=0}^\infty \|y_r\|^2$ is finite, we need two more results.

\begin{lem}\label{l4.4}
Let $u\in \H$ be such that $\sum_{j\ge 0} \|T^{*j}u \| < +\infty$. 
Then
$$\lim_{n\to \infty} \frac{1}{n} \norm{\sum_{k=0}^n T^k u }^2 = 
\|u\|^2 + 2 Re \sum_{k=1}^\infty \langle u ; T^k u \rangle.$$
\end{lem}

\begin{proof}
We first notice that the sum $\sum_{k=1}^\infty \langle u ; T^k u \rangle$ is absolutely convergent since $( \|T^{*j}u \|)_{j\ge 0}$ is summable. 
For each $n \in \N^*$, we have
\begin{align*} 
 \frac{1}{n}  \norm{ \sum_{k=0}^n T^k u }^2  & = \frac{1}{n} \left(  \sum_{i=0}^n \|T^i u\|^2
+ 2 Re \left( \sum_{0 \le i <j \le n } \langle T^i u ; T^j u \rangle \right) \right) \\
& = \frac{1}{n} \left(  \sum_{i=0}^n \|u\|^2
+ 2 Re \left( \sum_{0 \le i <j \le n } \langle u ; T^{j-i} u \rangle \right) \right) \\
& = \frac{n+1}{n} \|u\|^2 + \frac{2}{n} Re \left( \sum_{r=1}^n (n-r+1)\langle u ; T^r u \rangle \right)  \\
& = \frac{n+1}{n} \|u\|^2 + 2 Re \left( \sum_{r=1}^n \langle u ; T^r u \rangle  - \frac{1}{n}
\sum_{r=1}^n (r-1)\langle u ; T^r u \rangle  \right).
\end{align*} 
On the other hand, we have
$$ \l| \frac{1}{n} \sum_{r=1}^n (r-1)\langle u ; T^r u \rangle\r| 
\le \frac{1}{n} \|u\| \sum_{r=1}^n (r-1) \|T^{*r} u \|.$$
Using again the summability of the sequence $(\|T^{*j}u \|)_{j\ge 0}$ and the Kronecker's lemma (see for instance \cite[Lemma IV.3.2]{Shi}), we get
$$ \frac{1}{n} \sum_{r=1}^n (r-1)\langle u ; T^r u \rangle \tend{n\to \infty} 0.$$
As the series $\sum_{k\ge1} \langle u ; T^k u \rangle$ is convergent, we obtain, as $n$ tends to infinity, 
$$\lim_{n\to \infty} \frac{1}{n} \norm{\sum_{k=0}^n T^k u }^2 = 
\|u\|^2 + 2 Re \sum_{k=1}^\infty \langle u ; T^k u \rangle.$$
\end{proof}

\begin{lem}\label{l4.5}
Let $u \in \H$. For every $r \in \N$ we have
$$ \norm{ \sum_{j=0}^r T^j u_{r-j} }^2 = \lim_{n\to \infty} \frac{1}{n} \norm{ \sum_{j=0}^n T^j u^r}^2.$$
\end{lem}

\begin{proof}
Let $n \ge r$. For  $k \in \N$ we have
$$ P_k \l( \sum_{j=0}^n T^j u^r \r) = 
\begin{cases}
 \sum_{j=0}^k T^j u_{k-j} \quad & \text{if}\quad 0 \le k < r, \\
 \sum_{j=0}^r T^j u_{k-j} \quad & \text{if}\quad r \le k \le n, \\
 \sum_{j=k-n}^r T^j u_{k-j} \quad & \text{if}\quad  n < k \le n+r, \\
 0 & \text{if} \quad k > n+r.
\end{cases}$$
Using the decomposition of $\H$ as $\H = \bigoplus_{n=0}^\infty T^n \K$, we obtain
$$\frac{1}{n} \norm{ \sum_{j=0}^n T^j u^r}^2 
= \frac{1}{n} \sum_{k = 0}^{r-1} \l\| \sum_{j=0}^k T^j u_{k-j} \r\|^2
 + \frac{1}{n} \sum_{k = r}^{n} \norm{ \sum_{j=0}^r T^j u_{k-j} }^2
 + \frac{1}{n} \sum_{k = n+1}^{n+r} \norm{\sum_{j=k-n}^r T^j u_{k-j}}^2.$$
We have
$$ \frac{1}{n} \l( \sum_{k = 0}^{r-1} \l\| \sum_{j=0}^k T^j u_{k-j} \r\|^2 \r) \tend{n\to \infty} 0$$
and
$$\frac{1}{n} \sum_{k = n+1}^{n+r} \norm{\sum_{j=k-n}^r T^j u_{k-j}}^2 = 
\frac{1}{n} \l( \sum_{k = 1}^{r} \norm{\sum_{j=k}^r T^j u_{k-j}}^2 \r) \tend{n\to \infty} 0,$$
as well as
$$\frac{1}{n} \sum_{k = r}^{n} \norm{ \sum_{j=0}^r T^j u_{k-j} }^2 
= \frac{n-r+1}{n} \norm{ \sum_{j=0}^r T^j u_{r-j} }^2 \tend{n\to\infty} \norm{ \sum_{j=0}^r T^j u_{r-j} }^2.$$
We thus obtain
$$ \lim_{n\to \infty} \frac{1}{n} \norm{ \sum_{j=0}^n T^j u^r}^2 = \norm{ \sum_{j=0}^r T^j u_{r-j} }^2 .$$
\end{proof}

We finally show that $\sum_{r \ge 0} \|y_r\|^2 < \infty$. Using Lemma  \ref{l4.5}, we have for each $r \in \N$,
$$ \|y_r\|^2 = \norm{ \sum_{i=0}^r T^i x_{r-i} }^2 
= \lim_{n\to \infty} \frac{1}{n} \norm{ \sum_{i=0}^n T^i x^r}^2.$$
Using the parallelogram identity for the vectors $x^r + R_r = x$, we get
$$ \frac{2}{n} \norm{ \sum_{i=0}^n T^i x^r}^2 =  \frac{1}{n} \norm{ \sum_{i=0}^n T^i x}^2
 + \frac{1}{n} \norm{ \sum_{i=0}^n T^i (x^r - R_r)}^2 \\
 - \frac{2}{n} \norm{ \sum_{i=0}^n T^i R_r}^2. $$
Make now $n$ tends to infinity. Using Lemma \ref{l4.4} for $R_r$ and $x^r - R_r$, and the hypothesis $\frac{1}{n} \norm{\sum_{k=0}^n T^k x }^2 \tend{n\to\infty} 0$, we obtain
\begin{align*}
 2 \|y_r\|^2 & =  \lim_{n\to \infty} \frac{2}{n} \norm{ \sum_{i=0}^n T^i x^r}^2 \\
& =  \lim_{n\to \infty} \frac{1}{n} \norm{ \sum_{i=0}^n T^i (x^r - R_r)}^2 - 2 \lim_{n \to \infty}
\frac{1}{n} \norm{ \sum_{i=0}^n T^i R_r}^2\\
& = \| x^r - R_r \|^2 - 2 \|R_r\|^2 \\
& \quad + 2 Re \sum_{k=1}^\infty \Big( \langle x^r - R_r ; T^k(x^r - R_r) \rangle
 - 2 \langle R_r ; T^k R_r \rangle \Big) \\
&  =  \|x^r\|^2 - \|R_r\|^2  + 2 Re \sum_{k=1}^\infty \Big( \langle x^r ; T^k x^r \rangle - 
\langle x^r ; T^k R_r \rangle \\
& \quad - \langle R_r ; T^k x^r \rangle - \langle R_r ; T^k R_r \rangle \Big) \\
& = \|x^r\|^2 - \|R_r\|^2 + 2 Re \sum_{k=1}^\infty  \langle x^r ; T^k x^r \rangle  - 
2 Re \sum_{k=1}^\infty  \langle R_r ; T^k x \rangle.
\end{align*}
Using now Lemma~\ref{l4.4} applied to $x^r$ and Lemma~\ref{l4.5}, we get 
\begin{align*}
 \|x^r\|^2 + 2 Re \sum_{k=1}^\infty \langle x^r ; T^k x^r\rangle 
& = \lim_{n\to \infty} \frac{1}{n} \norm{\sum_{i=0}^\infty T^i x^r}^2 \\
& = \|y_r\|^2.
\end{align*}
We can infer that
$$ 2 \|y_r\|^2 = \|y_r\|^2 - \|R_r\|^2 - 2 Re \sum_{k=1}^\infty \langle R_r ; T^k x\rangle,$$
so
$$ \|y_r\|^2 = - \|R_r\|^2 - 2 Re \sum_{k=1}^\infty \langle R_r ; T^k x\rangle .$$
For each fixed $r$ we have $R_r = T^{r+1} T^{*(r+1)}x$. Thus $ \|R_r\| = \| T^{*(r+1)}x\|$. As 
$$\sum_{j=1}^{+\infty} j \|T^{*j}x\|< +\infty ,$$ 
we obtain that $(\|R_r\|^2)_r$ is summable.
It suffices to show that
$$\sum_{r=0}^\infty \abs{\sum_{k=1}^\infty \langle R_r ; T^k x \rangle } < \infty.$$
We have
\begin{align*}
\abs{ \sum_{k=1}^\infty \langle R_r ; T^k x \rangle } & =  \abs{ \sum_{k=1}^r \langle R_r ; T^k x \rangle 
+ \sum_{k = r+1}^\infty \langle R_r ; T^k x \rangle } \\
& = \abs{ \sum_{k=1}^r \langle R_r ; T^k x \rangle 
+ \sum_{k=r+1}^\infty \langle R_k ; T^k x \rangle } \\
& \le \sum_{k= 1}^r \|R_r\| \|T^k x\| + \sum_{k=r+1}^\infty \|R_k\| \| T^k x\| \\
& \le \|x\| \left( r \|R_r\| + \sum_{k=r+1}^\infty \|R_k\| \right) \\
& \le \|x\| \left( r \|T^{*(r+1)}x\| + \sum_{k=r+1}^\infty \|T^{*(k+1)}x\| \right).
\end{align*}
Using again the summability of $(r \|T^{*r}x\|)_r$, we get
$$ \sum_{r=0}^\infty \sum_{k=r+1}^\infty \|T^{*k}x\| = \sum_{k=0}^\infty k \|T^{*k}x\| < \infty.$$
Therefore $\sum_{r=0}^\infty \|y_r\|^2 <\infty.$ 
\end{proof}

\section{The case of contractions}

We now prove Theorem~\ref{t4.7} and its consequence Corollary~\ref{t4.8}. 

\begin{proof}[Proof of Theorem \ref{t4.7}]
Let $D$ denote the defect operator $D = (I-T^*T)^{1/2}$, which is well defined since $T$ is a contraction. As
$$ \|Tx\|^2 + \|Dx\|^2 = \scal{T^*Tx}{x} + \scal{(I-T^*T)x}{x} = \|x\|^2,$$
the operator $R : \ell^2(\H) \mapsto  \ell^2(\H)$ given by 
$$ R(x_0, x_1, x_2, \cdots) = (Tx_0, Dx_0, x_1, x_2, \cdots)$$
and with matrix representation 
\begin{equation}
R = 
\begin{bmatrix}
T &  &  \\
D &  & \\
 & I & \\
 & & I \\
 & & & \ddots
\end{bmatrix},
\end{equation}
is an isometry. We can thus apply Theorem~\ref{t4.6} to $R$. 

The iterates of $R$ are given by 
$$ R^k(x_0, x_1, x_2, \cdots) = (T^kx_0, DT^{k-1}x_0, DT^{k-2}x_0,\cdots ,DTx_0, Dx_0, x_1, x_2, \cdots)$$
while their adjoints are given by
$$ R^{*k}(x_0, x_1, x_2, \cdots) = (T^{*k}x_0 + T^{*(k-1)}Dx_1 + \cdots  + T^*Dx_{k-1}, Dx_k, x_{k+1}, x_{k+2}, \cdots).$$
Denote $\tilde{x} = (x,0,0, \cdots) \in \ell^2(\H)$ and $\tilde{y} = (y,y_1,y_2, \cdots) \in \ell^2(\H)$. The equation 
$$\tilde{x} = (I-R)\tilde{y}$$ reduces to the system of equations $x = (I-T)y$, $y_1 = Dy$, $y_2 = y_1$, $y_3 = y_2$, etc. As $\tilde{y} \in \ell^2(\H)$, we obtain $y_1 = y_2 = \cdots = 0$. Therefore the equation $\tilde{x} = (I-R)\tilde{y}$ in $\ell^2(\H)$ is equivalent to
$$ x = (I-T)y \quad \text{and} \quad Dy = 0 .$$
Every positive (i.e. positive semi-definite) operator has the same kernel as its positive square-root ; thus $(I-T^*T)y = 0$. 
Therefore $\|Ty\| = \|y\|$. 

An easy computation shows that the summability condition $\sum_{k=0}^\infty k \| R^{*k} \tilde{x} \| < \infty$ 
is equivalent to $\sum_{k=0}^\infty k \| T^{*k}x \| < \infty$.

Notice now that 
$$ R^k \tilde{x} = R^k(x,0,0, \cdots) = (T^kx, DT^{k-1}x, \cdots, Dx, 0,0, \cdots).$$
Therefore 
$$ \sum_{k=0}^n R^k \tilde{x} = (\sum_{k=0}^n T^kx, D(\sum_{k=0}^{n-1} T^kx), D(\sum_{k=0}^{n-2} T^kx), \cdots, Dx, 0,0, \cdots).$$
Hence, using the notation $S_n(T) x = x + Tx + \dots + T^{n-1}x$, the $o(\sqrt{n})$ condition 
$$\|\sum_{k=0}^{n}R^k\tilde{x}\| = o(\sqrt{n})$$ is equivalent to 
$$ \|\sum_{k=0}^{n}T^k x\| = o(\sqrt{n}) \quad \text{and} \quad \sum_{k=0}^{n} \|D(S_k(T)x)\|^2 = o(n).$$
The proof is now complete using the identity $\|Du\|^2 = \|u\|^2 - \|Tu\|^2$. 
\end{proof}

Corollary~\ref{t4.8} follows from Theorem~\ref{t4.7} and Kronecker's lemma, already used in the proof of Theorem~\ref{t4.6}.

\section{Coboundaries of the doubling map}

Let $\val_2(n)$ be the $2$-valuation of $n$, that is 
$$\val_2(n) = k \quad \text{if} \quad n = m 2^k\quad \text{with}\quad m \notin 2 \Z.$$
For $n\in \Z$, we denote by $ \hat{f}(n) =  \int_0^1 f(t)e^{-int} \, dt$ the $n$-th Fourier coefficient of $f \in L^2 (0,1)$.

\begin{cor}\label{valuation}
Suppose $f$ is a periodic function of period $1$ such that $f \in L^2 (0,1)$,
\begin{equation}\label{eq:41}
\int_0^1 f(t) \, dt = 0
\end{equation}
and there exists $\eps >0$ such that 
\begin{equation}\label{eq:42}
\sum_{n=-\infty}^\infty \val_2 (n)^{4+\eps} \left|\hat{f}(n)\right|^2 < \infty .  
\end{equation}
Then there is a function $g$ in $L^2(0,1)$ of period one such that
$$ f(t) = g(t) - g(2t) \quad a.e. $$
if and only if
$$ \lim_{n\to \infty} \frac{1}{n} \int_0^1 \l| \sum_{i=0}^n f(2^i t)\r|^2 dt = 0$$
\end{cor}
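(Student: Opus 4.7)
The plan is to apply Theorem~\ref{t4.6} to the Koopman operator $T=S$ defined by $Sf(t)=f(2t)$ on $\H=L^2(0,1)$, with $x=f$. A change of variables together with the $1$-periodicity of $f$ shows immediately that $S$ is an isometry on $\H$, so once both hypotheses of Theorem~\ref{t4.6} are in place the conclusion rewrites as stated, because $\sum_{k=0}^nS^kf(t)=\sum_{i=0}^n f(2^it)$ and hence the $o(\sqrt{n})$ growth condition of the ergodic sum becomes precisely the displayed limit. Nothing else beyond Theorem~\ref{t4.6} is needed for the equivalence itself, so the real content lies in verifying~\eqref{eq:summab}.

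The bridge between the Fourier hypothesis~\eqref{eq:42} and \eqref{eq:summab} is an explicit formula for $S^*$ on the Fourier side. Writing $e_m(t)=e^{2\pi imt}$, one has $Se_m=e_{2m}$, whence $S^*e_m=e_{m/2}$ when $m$ is even and $S^*e_m=0$ when $m$ is odd. Iterating gives $\widehat{S^{*k}f}(n)=\hat f(2^kn)$ for all $n\in\Z$, so by Parseval (and using \eqref{eq:41} to discard $m=0$)
$$\|S^{*k}f\|^2 \;=\; \sum_{\substack{m\in\Z\\ \val_2(m)\ge k}}|\hat f(m)|^2.$$

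The main obstacle is the combinatorial passage from this identity and~\eqref{eq:42} to $\sum_k k\|S^{*k}f\|<\infty$, and I would carry it out in two moves. First, swap the order of summation in $\sum_{k\ge 0}(k+1)^{3+\eps}\|S^{*k}f\|^2$: the inner sum becomes $\sum_{k=0}^{\val_2(m)}(k+1)^{3+\eps}$, which is bounded by a constant times $1+\val_2(m)^{4+\eps}$, so the whole quantity is controlled by $\|f\|^2$ plus the finite quantity in~\eqref{eq:42}. Second, one application of Cauchy--Schwarz against the summable weight $k^2/(k+1)^{3+\eps}\sim k^{-1-\eps}$ converts this weighted $\ell^2$ bound into the $\ell^1$ bound $\sum_k k\|S^{*k}f\|<\infty$ required by Theorem~\ref{t4.6}. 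The exponent $4+\eps$ in \eqref{eq:42} is tuned exactly to leave the $\eps$ of slack that this Cauchy--Schwarz step consumes. Once \eqref{eq:summab} is in hand, Theorem~\ref{t4.6} delivers both directions of the stated equivalence.
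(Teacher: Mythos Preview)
Your proposal is correct and follows essentially the same route as the paper: apply Theorem~\ref{t4.6} to the Koopman isometry, identify $\|S^{*k}f\|^2$ with $\sum_{\val_2(m)\ge k}|\hat f(m)|^2$, then combine a Fubini swap with a Cauchy--Schwarz step against a weight of order $k^{-1-\eps}$ to convert~\eqref{eq:42} into~\eqref{eq:summab}. The only cosmetic difference is that the paper applies Cauchy--Schwarz first and then interchanges sums, while you interchange first and apply Cauchy--Schwarz afterward; the exponents and the role of the $\eps$ slack are identical.
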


\begin{proof}
We use Theorem \ref{t4.6} applied to the isometry $T : L^2(0,1) \arr L^2(0,1)$ defined by
$$ Tf (t) = f(2t), \quad t \in (0,1)\  \text{mod} \  1.$$
We first remark that condition \eqref{eq:41} is justified by the fact that $T$ acts as a shift operator on the subspace of 
$L^2 (0,1)$ of functions whose zeroth Fourier coefficient vanishes. The condition
$$ \lim_{n\to \infty} \frac{1}{n} \int_0^1 \l| \sum_{i=0}^n f(2^i t)\r|^2 dt = 0$$
is exactly the condition
$$ \lim_{n\to \infty} \frac{1}{n} \l\|\sum_{k= 0}^n T^k f \r\|^2 = 0,$$
which appears in Theorem \ref{t4.6}. 
We want to show that 
$$ \sum_{k=0}^\infty k \| T^{*k} f \| < \infty.$$
Recall that $T$ acts as a shift operator on 
the subspace of $L^2 (0,1)$ of functions whose zeroth Fourier coefficient vanishes. Let $(a_n) = (\hat{f}(n))$ be the sequence of Fourier coefficients of $f$.  We have $a_0 = 0$.
The iterates of the adjoint of $T$ at $f$ can be computed as 
$$ T^{*k}f (t) = \sum_{j=-\infty}^\infty a_{j 2^k} e^{2i\pi j t}, \quad k \in \N.$$

For $\eps >0$, using the change $n = j2^k$ in the order of summation, we get
\begin{align*}
\sum_{k=0}^\infty k \| T^{*k} f \| 
& = \sum_{k=0}^\infty k \l( \sum_{j=-\infty}^\infty | a_{j 2^k} |^2 \r)^{1/2} \\
& = \sum_{k=0}^\infty k^{-(1+\eps)/2} \l(  k^{3+\eps} \sum_{j=-\infty}^\infty | a_{j 2^k} |^2 \r)^{1/2} \\
& \le \l( \sum_{k=0}^\infty k^{-(1+\eps)} \r)^{1/2} \l( \sum_{k=0}^\infty k^{3+\eps} \sum_{j=-\infty}^\infty |a_{j 2^k}|^2 \r)^{1/2} \\
& = \l( \sum_{k=0}^\infty k^{-(1+\eps)} \r)^{1/2} \l( \sum_{n =-\infty}^\infty |a_n|^2 \sum_{k=0}^{\val_2(n)} k^{3+\eps}\r)^{1/2} \\
& \le 2 \l( \sum_{k=0}^\infty k^{-(1+\eps)} \r)^{1/2} \l( \sum_{n =-\infty}^\infty \val_2(n)^{4+\eps} |a_n|^2 \r)^{1/2} .
\end{align*}
Thus, under our hypothesis about the Fourier coefficients, we have
$$ \sum_{k=0}^\infty k \| T^{*k} f \| < \infty.$$
\end{proof}

\begin{cor}\cite{Rochberg}
Let $f$ be a periodic function of period $1$ such that $f \in L^2 (0,1)$,
$$\int_0^1 f(t) \, dt = 0$$
and there exists $\alpha > 0$ such that
\begin{equation}\label{e4.12}
\sum_{k =-\infty}^\infty |\hat{f}((2k+1))2^i|^2 = O (2^{-\alpha i}).
\end{equation}

Then there is a function $g$ in $L^2(0,1)$ of period one such that
$$ f(t) = g(t) - g(2t) \quad a.e. $$
if and only if
$$ \lim_{n\to \infty} \frac{1}{n} \int_0^1 \l| \sum_{i=0}^n f(2^i t)\r|^2 dt = 0$$
\end{cor}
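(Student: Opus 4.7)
The plan is to reduce this to Corollary~\ref{valuation} by verifying that the decay hypothesis \eqref{e4.12} implies the $2$-valuation weighted $\ell^2$ bound \eqref{eq:42}. The ergodic-sum condition and the zero-mean assumption are identical in the two statements, so the only thing to check is the Fourier-coefficient hypothesis.

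First I would partition $\Z \setminus \{0\}$ according to the $2$-valuation: every nonzero integer $n$ can be written uniquely as $n = (2k+1)2^i$ with $i = \val_2(n)$ and $k \in \Z$. Grouping terms in \eqref{eq:42} accordingly, with $(a_n) = (\hat{f}(n))$, yields
\begin{equation*}
\sum_{n=-\infty}^\infty \val_2(n)^{4+\eps} |a_n|^2
 = \sum_{i=0}^\infty i^{4+\eps} \sum_{k=-\infty}^\infty \bigl|\hat{f}\bigl((2k+1)2^i\bigr)\bigr|^2 .
\end{equation*}

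Next I would invoke \eqref{e4.12} (read as $\sum_k |\hat f((2k+1)2^i)|^2 = O(2^{-\alpha i})$, which is how the expression makes sense): there exists a constant $C > 0$ such that the inner sum is bounded by $C\, 2^{-\alpha i}$ for every $i \ge 0$. Substituting gives
\begin{equation*}
\sum_{n=-\infty}^\infty \val_2(n)^{4+\eps} |a_n|^2 \le C \sum_{i=0}^\infty i^{4+\eps}\, 2^{-\alpha i} < \infty,
\end{equation*}
since exponential decay with rate $2^{-\alpha}<1$ dominates the polynomial factor $i^{4+\eps}$. Hence hypothesis \eqref{eq:42} of Corollary~\ref{valuation} is fulfilled, with some $\eps > 0$ (any fixed positive $\eps$ works).

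With this verification in hand, Corollary~\ref{valuation} applies to the isometry $Tf(t) = f(2t)$ on the subspace of $L^2(0,1)$ of functions with vanishing zeroth Fourier coefficient, and yields exactly the stated equivalence between solvability of $f(t) = g(t) - g(2t)$ and the $o(\sqrt{n})$ growth of the ergodic sums. There is no genuine obstacle here; the only substantive step is the rearrangement above, and the polynomial-times-geometric summability is automatic.
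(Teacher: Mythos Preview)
Your proof is correct and follows essentially the same approach as the paper: both reduce to Corollary~\ref{valuation} by rewriting $\sum_n \val_2(n)^{4+\eps}|\hat f(n)|^2$ as $\sum_{i\ge 0} i^{4+\eps}\sum_k |\hat f((2k+1)2^i)|^2$ and then invoking \eqref{e4.12} to dominate the result by a convergent polynomial-times-geometric series. The paper simply fixes $\eps=1$, while you leave $\eps>0$ arbitrary; this is not a substantive difference.
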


\begin{proof}
The result follows from Corollary \ref{valuation} with $\eps = 1$, say. Indeed, using the condition \eqref{e4.12}, one can estimate
\begin{align*} 
\sum_{n=-\infty}^\infty \val_2 (n)^{5} \left|\hat{f}(n)\right|^2 & = \sum_{i=1}^{\infty}\sum_{k=-\infty}^\infty i^5|\hat{f}((2k+1))2^i|^2\\
 & \lesssim  \sum_{i=1}^{\infty} \frac{i^5}{2^{\alpha i}} < \infty .  
\end{align*}
\end{proof}
 
\begin{remarque}
Condition \eqref{e4.12} is condition (a) from Theorem~4 in \cite{Rochberg}. It has been proved in \cite{Rochberg} that each of other three conditions of Hölder type, called there (b), (c) and (d), implies the condition \eqref{e4.12}. 
Mark Kac has already considered in \cite{Kac} the case when $f$ is in the Hölder class $C^{0,\alpha}$ for some $\alpha > 1/2$. We refer to \cite{Fortet, Cie, Fuku} for other contributions concerning the functional equation $f(t) = g(t) - g(2t)$.
\end{remarque}

\begin{remarque}
All the remarks at the end of the paper \cite{Rochberg} apply also in our situation. In particular, the generalization to the functional equation $f(t) = g(t) - g(nt)$ (for a fixed integer $n$) is immediate. 
\end{remarque}


\section{Declarations}

\subsection{Acknowledgments.} Some of the results presented here are part of the 2012 PhD thesis of the second-named author \cite{Devys} written under the supervision of the first-named author. We wish to thank several persons who encouraged us to present these results to a larger audience and/or to revisit\footnote{As Amor Towles said: ``For as it turns out, one can {\bf revisit} the past quite pleasantly, as long as one does so expecting nearly every aspect of it to have changed''.} them. Special thanks are due to Michael Lin for several interesting comments and remarks. We would like to thank the anonymous referee for a careful reading of the manuscript and very useful suggestions. The first-named author would like to thank the Max Planck Institute for Mathematics in Bonn for providing excellent working conditions and support. 

\subsection{Dedication.} We dedicate the article to the memory of J\"org Eschmeier, a nice person and a master of both abstract and concrete Operator Theory. 


\subsection{Conflict of interest statement.} On behalf of all authors, the corresponding author states that there is no conflict of interest.

\subsection{Data availability statement.} No datasets were generated or analysed during the current study.

\end{document}